\newcommand{\N}{\mathbb{N}}
\newcommand{\R}{\mathbb{R}}
\newcommand{\C}{\mathbb{C}}
\renewcommand{\le}{\leqslant}
\renewcommand{\ge}{\geqslant}
\DeclareMathOperator{\supp}{supp}
\newcommand{\ra}{\rightarrow}
\theoremstyle{plain}
\newtheorem{theorem}{Theorem}[section]            
\newtheorem{proposition}[theorem]{Proposition}    
\newtheorem{corollary}[theorem]{Corollary}
\newtheorem{definition}[theorem]{Definition}
\newtheorem{remark}[theorem]{Remark}
\newtheorem{example}{Example}
\title{Quasi-isospectrality on quantum graphs}
\author{Ralf Rueckriemen }
\begin{document}




\nocite{BolteEndres09}
\nocite{BPB09}

\maketitle

\begin{abstract}
Consider two quantum graphs with the standard Laplace operator and non-Robin type boundary conditions at all vertices. We show that if their 
eigenvalue-spectra agree everywhere aside from a sufficiently sparse set, then the eigenvalue-spectra and the length-spectra 
of the two quantum graphs are identical, with the possible exception of the multiplicity of the eigenvalue zero. Similarly if their length-spectra agree everywhere aside from a sufficiently sparse set, then the quantum graphs have the same eigenvalue-spectrum and length-spectrum, again with the possible exception of the eigenvalue zero. 
\end{abstract}

\section{Introduction}

Let $G$ be a finite metric graph, that is a combinatorial graph where each edge is equipped with a positive real length.
Let the standard Laplace operator $\Delta=-\frac{\partial^2}{\partial x^2}$ act on the edges of $G$. Impose some 
boundary conditions (for example Kirchhoff-Neumann or Dirichlet) at all the vertices. 

A metric graph together with the operator and the boundary conditions is called a quantum graph. Quantum graphs are a popular 
model for various processes involving wave propagation in mathematics and physics. 
The Laplace operator on $G$ has an infinite discrete spectrum with finite multiplicities, the eigenvalue-spectrum of 
the quantum graph. The length-spectrum of $G$ consists of the lengths of all periodic orbits, each with a weight that depends 
on the boundary conditions at the vertices it passes through. 

One can now study the interplay between the quantum graph, its eigen\-value-spectrum and its length-spectrum. 

Asking how much information about the quantum graph is contained in the eigenvalue-spectrum is 
a well studied question. Under some suitable genericity conditions the spectrum determines the quantum graph uniquely \cite{GutkinSmilansky01}. On the other hand, there are numerous examples 
of Laplace-iso\-spectral non-isometric quantum graphs, see for example \cite{vonBelow99}, \cite{BSS06} or \cite{BPB09}. 

For the relationship between the eigenvalue-spectrum and the length-spectrum we can look at the manifold setting for inspiration.
Huber's theorem states that the eigenvalue-spectrum and the  length-spectrum determine each other on Riemannian manifolds of 
constant negative curvature (see \cite{Huber59} for the surface case and  \cite{DuistermaatGuillemin75} or \cite{Gangolli77} 
for the general case).

This is true for quantum graphs as well. 
Quantum graphs admit an exact trace formula, first proved in \cite{Roth83}, later also in \cite{KottosSmilansky99} and \cite{BolteEndres09} 
in much greater generality. It implies that the eigenvalue-spectrum and the length-spectrum determine each other.
Thus there are examples of pairs of quantum graphs that are Laplace-isospectral and length-isospectral but not isometric.

In this paper we will be concerned with \emph{quasi-isospectrality}. Assume two quantum graphs have the same eigenvalue-spectrum (including 
multiplicities) everywhere aside from some small exceptional set. We show that if this exceptional set has asymptotic density zero 
relative to the number of eigenvalues, then the two quantum graphs have to be Laplace-isospectral and length-isospectral, see theorem \ref{eigenvalue_spectrum} for the exact statement. In other words, quasi-isospectrality is impossible, either two quantum graphs are isospectral or their eigenvalue-spectra are quite different, there is no almost Laplace-isospectrality. The multiplidity of the eigenvalue zero is an exception, it can be different even if the rest of the eigenvalue-spectrum and the length-spectrum agree, see remark \ref{eigenvalue_zero}.

Next we consider the case of two quantum graphs having the same length-spectrum everywhere aside from some exceptional set. We show 
that if this exceptional set has asymptotic density zero relative to the length, then the two quantum graphs are length-isospectral 
and Laplace-isospectral, see theorem \ref{length_spectrum} for the exact statement. Again, there is the exception of the eigenvalue zero but  there is no almost length-isospectrality.

The same problem has been studied in the manifold setting. The case of hyperbolic 3-manifolds is considered in \cite{EGM98}. 
They show that 
a finite exceptional set implies length-isospectrality and Laplace-isospectrality both for the eigenvalue-spectrum and the length-spectrum. 
This was then generalized to hyperbolic manifolds of arbitrary dimension in \cite{BhagwatRajan10}, still with the restriction of a 
finite exceptional set. Finally, \cite{Kelmer11} shows the statements equivalent to ours for hyperbolic manifolds of arbitrary dimension.

This paper is structured as follows. First we introduce some notation for quantum graphs and define  
the non-Robin type boundary conditions 
we are going to use. Next we recall the trace formula for quantum graphs. We conclude the setup with the definition of the length-spectrum of a quantum graph. In section three and four we state and prove the two main theorems, first for the eigenvalue-spectrum 
and then for the length-spectrum.

\section{Setup}

\subsection{Quantum graphs and boundary conditions}

Let $G=(V,E, L)$ be a metric graph, $V$ is the set of vertices, $E$ the set of edges, each edge has a positive real length 
$L : E \ra \R_{>0}$ associated to it. We only consider finite metric graphs, all edge lengths are finite. The graphs are allowed to 
have loops and multiple edges.

The differential operator we consider is the standard Laplace operator acting as $\Delta=-\frac{\partial^2}{\partial x^2}$ on 
all edges. 

We will impose non-Robin type boundary conditions at all vertices. In particular, this will make the operator self-adjoint.
Non-Robin type boundary conditions do not mix conditions on the function with conditions on its derivative, 
examples include the most common boundary conditions such as Kirchhoff-Neumann or Dirichlet.

We will follow the treatment in \cite{KostrykinSchrader03}.
Assign an arbitrary orientation to all edges $1, \hdots, E$. Let $f : G \ra \C$ be a function on $G$, then we define its end values
\begin{align}
 F:=&(f_1(0), \hdots, f_E(0), f_1(L(1)), \hdots, f_E(L(E)))^T  \\
 F':=&(f_1'(0), \hdots, f_E'(0), -f_1'(L(1)), \hdots, -f_E'(L(E)))^T
\end{align}
We can write the boundary conditions as
\begin{align}
 AF + BF'=0
\end{align}
where $A,B  \in M(2E,\C)$. Non-Robin type boundary conditions are parame\-trized by pairs of matrices where $(A,B)$ has full 
rank and \mbox{$AB^*=0$}.
This parametrization is not unique. The matrices $A$ and $B$ have a block structure corresponding to the vertices. The 
Kirchhoff-Neumann 
boundary conditions at a vertex $v$ can be parametrized with the pair of matrices
\begin{align}
 A_v:= \begin{pmatrix} 1 & -1 && \\ & \ddots & \ddots & \\  & & 1 & -1 \\&&&0   \end{pmatrix} &&
B_v:= \begin{pmatrix} &&& \\ &&& \\  &&&\\ 1&1&1&1   \end{pmatrix}
\end{align}
where all not indicated matrix entries are zero.

\begin{definition}
A quantum graph is a metric graph equipped with a differential operator and some boundary conditions at the vertices. 
\end{definition}

\begin{proposition}
Given a quantum graph $G$ with non-Robin type boundary conditions at all vertices the Laplacian $\Delta$ is self-adjoint and has an infinite 
positive discrete spectrum $\{k_n^2\}_n$ with a single accumulation point at infinity. The multiplicity of each eigenvalue is finite. 
\end{proposition}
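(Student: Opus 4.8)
The plan is to realise $\Delta$ as the self-adjoint operator associated with a non-negative closed quadratic form, and then to deduce discreteness and finite multiplicities from the compactness of the resolvent on the finite graph $G$. There are three things to establish: self-adjointness, non-negativity of the spectrum, and discreteness of the spectrum together with finite multiplicities, infinitely many eigenvalues, and a single accumulation point at $+\infty$.

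First I would address self-adjointness. Taking the natural domain of functions that are $H^2$ on each edge and satisfy $AF + BF' = 0$, Green's identity gives
\begin{align}
 \langle \Delta f, g\rangle - \langle f, \Delta g\rangle = \langle F', G\rangle - \langle F, G'\rangle,
\end{align}
so $\Delta$ is symmetric precisely when the boundary conditions cut out a Lagrangian subspace for the associated symplectic form on $\C^{2E}\oplus\C^{2E}$. By the Kostrykin--Schrader parametrisation this holds exactly when $(A,B)$ has full rank and $AB^*$ is Hermitian; the non-Robin hypothesis $AB^*=0$ is a special case, so $\Delta$ is self-adjoint and its spectrum is real. I would then record the structural consequence of $AB^*=0$ that drives the rest: the ranges of $A^*$ and $B^*$ are orthogonal, so $\C^{2E}$ splits as an orthogonal sum $\mathcal{D}\oplus\mathcal{N}$ for which the boundary conditions become equivalent to $P_{\mathcal{D}}F=0$ and $P_{\mathcal{N}}F'=0$, decoupling Dirichlet-type conditions on $F$ from Neumann-type conditions on $F'$.

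Non-negativity then follows from the quadratic form. Integrating by parts once,
\begin{align}
 \langle \Delta f, f\rangle = \int_G |f'|^2 \, dx + \langle F', F\rangle,
\end{align}
and using the decoupling one has $F = P_{\mathcal{N}}F$ and $F' = P_{\mathcal{D}}F'$, whence $\langle F', F\rangle = \langle P_{\mathcal{D}}F', P_{\mathcal{N}}F\rangle = 0$ by orthogonality of $\mathcal{D}$ and $\mathcal{N}$. Therefore $\langle \Delta f, f\rangle = \int_G |f'|^2 \ge 0$, the spectrum lies in $[0,\infty)$, and every eigenvalue can be written $k_n^2$ with $k_n\ge 0$. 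The vanishing of the boundary term is exactly where non-Robin-ness is used; a Robin contribution of the shape $\sum_v \sigma_v |f(v)|^2$ would otherwise survive and could produce negative eigenvalues.

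Finally, discreteness. The form domain is $\bigoplus_e H^1(e)$ intersected with the stable condition $P_{\mathcal{D}}F=0$, and its graph norm is equivalent to $\|f\|_{L^2}^2 + \|f'\|_{L^2}^2$. Since $G$ is finite, each edge is a bounded interval and $H^1(e)\hookrightarrow L^2(e)$ is compact by Rellich; a finite direct sum of compact embeddings is compact, so the form domain embeds compactly into $L^2(G)$. Hence $\Delta$ has compact resolvent, and the spectral theorem for such operators yields a discrete spectrum of finite multiplicities accumulating only at $+\infty$; as $L^2(G)$ is infinite-dimensional, the eigenvalues form an infinite sequence $k_n^2\to\infty$. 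I expect the self-adjointness step to be the main obstacle: one must correctly extract the Lagrangian, or equivalently the Dirichlet--Neumann splitting $\mathcal{D}\oplus\mathcal{N}$, forced by $AB^*=0$, and verify that the form is closed on the stated domain. Once this algebraic structure is in place, positivity and compactness are routine consequences of $G$ being a finite metric graph.
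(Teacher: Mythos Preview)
Your argument is correct. The paper, however, does not give a proof at all: it simply records the result as well known and attributes it to ellipticity of the Laplacian together with compactness of the quantum graph. Your route via the closed quadratic form and Rellich compactness is the standard way to make that one-liner precise, and it has the added virtue of actually explaining why the non-Robin hypothesis $AB^*=0$ forces non-negativity. Your extraction of the orthogonal splitting $\C^{2E}=\mathcal{D}\oplus\mathcal{N}$ from $AB^*=0$ and the full-rank condition is right (one checks $\operatorname{rank}A+\operatorname{rank}B=2E$ and $\operatorname{ran}A\cap\operatorname{ran}B=\{0\}$, so $AF+BF'=0$ decouples into $F\in\ker A=\mathcal{N}$ and $F'\in\ker B=\mathcal{D}$), and this is exactly what kills the boundary term $\langle F',F\rangle$ in the form identity. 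The paper's appeal to general elliptic theory is shorter but hides this structure; your argument is self-contained and isolates precisely where ``non-Robin'' is used.
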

\begin{proof}
 This is well known, it follows from the fact that the Laplacian is elliptic and quantum graphs are compact.
\end{proof}

\begin{definition}
 We say two quantum graphs are {\bf Laplace-isospectral} if they have the same eigenvalue-spectrum, including multiplicities.
\end{definition}

\begin{definition}\label{S-matrix}
We define the {\bf $S$-matrix} of a quantum graph as
\begin{align} 
 S=S(A,B):=-(A+iB)^{-1}(A-iB)
\end{align}
\end{definition}

The conditions on the matrices $A$ and $B$ imply that this is well defined. This formula is a special case of the more general formula 
that includes a $k$ dependence and parametrizes Robin type boundary conditions as well, see \cite{KostrykinSchrader03}. 

 Let 
\begin{align}
 T(k):=\begin{pmatrix} 0 & t(k) \\ t(k) & 0 \\ \end{pmatrix} && t(k):= \begin{pmatrix} e^{ikL(1)} &&  \\ & \ddots &\\\ && e^{ikL(E)} \\ \end{pmatrix}
\end{align}
This matrix contains the metric information of $G$.

\subsection{The trace formula}

\nocite{GasquetWitomski99}

We require a suitable space of test functions for the trace formula. 
We are going to use the following.
\begin{definition}
\label{test_function}
A function $\varphi : \C \ra \C$ is called a test function if $\varphi$ is
\begin{enumerate}
\item even, that is $\varphi(z)=\varphi(-z)$
\item holomorphic on the strip $H:=\{ z=x+iy\in \C | -1\le y \le 1\}$
\item rapidly decreasing on $H$, for all $N\in \N$ there exists a constant $c_N$ such that
\begin{align}
 |\varphi(z)| < c_N(1+|z|)^{-N}
\end{align}
for all $z \in H$.
\end{enumerate}
\end{definition}

\noindent We denote the Fourier transform of $\varphi$ by 
\begin{align}
 \widehat{\varphi}(\xi)=\mathcal{F}(\varphi)(\xi)=\frac{1}{2\pi}\int_{\R}e^{i \xi x}\varphi(x)dx
\end{align}
and the inverse Fourier transform by 
\begin{align}
 \check{\varphi}(x)=\mathcal{F}^{-1}(\varphi)(x)=\int_{\R}e^{-i \xi x}\varphi(\xi)d\xi
\end{align}

\begin{theorem}[Paley-Wiener]
\label{Paley-Wiener} \cite{Hormander76}

If $\psi: \R \ra \R$ has compact support then $\widehat{\psi}$ can be extended to a function that is holomorphic on all of $\C$. 

If $\psi \in C^{\infty}(\R)$ and $\supp(\psi) \subset [-M,M]$ then for any $N \in \N$ there exists a $c_N$ such that
\begin{align}
| \widehat{\psi}(z)| \le c_N(1+|z|)^{-N}e^{M|Im(z)|}
\end{align}
In particular $\widehat{\psi}$ is rapidly decreasing on the strip $H$.
\end{theorem}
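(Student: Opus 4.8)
The plan is to prove both assertions directly from the integral representation of $\widehat{\psi}$, exploiting that the support of $\psi$ is contained in the compact interval $[-M,M]$. Since $\psi$ vanishes outside $[-M,M]$, for real $\xi$ we have $\widehat{\psi}(\xi)=\frac{1}{2\pi}\int_{-M}^{M}e^{i\xi x}\psi(x)\,dx$. I would take this same formula as the definition of the candidate extension for complex argument, setting
$$ \widehat{\psi}(z):=\frac{1}{2\pi}\int_{-M}^{M}e^{izx}\psi(x)\,dx, \qquad z\in\C. $$
The integral converges absolutely for every $z$, because the integrand is continuous on the compact interval $[-M,M]$ and there $|e^{izx}|=e^{-x\,\mathrm{Im}(z)}\le e^{M|\mathrm{Im}(z)|}$.

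First I would establish holomorphy. For each fixed $x$ the map $z\mapsto e^{izx}$ is entire, and on any compact subset of $\C$ the derivative $\partial_z e^{izx}=ix\,e^{izx}$ is bounded uniformly in $x\in[-M,M]$. Hence one may differentiate under the integral sign (justified by dominated convergence on the compact domain, or alternatively by Morera's theorem together with Fubini), which shows that $\widehat{\psi}$ is holomorphic on all of $\C$ and agrees with the Fourier transform on $\R$. This proves the first claim; note that no smoothness of $\psi$ is needed here, only compact support and integrability.

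For the decay estimate I would use integration by parts, now assuming $\psi\in C^{\infty}(\R)$. Since $\frac{d^N}{dx^N}e^{izx}=(iz)^N e^{izx}$, integrating by parts $N$ times and discarding the boundary terms — which vanish because $\psi$ and all its derivatives are zero at $\pm M$, being the smooth limits of the identically-zero function outside $[-M,M]$ — yields
$$ z^N\widehat{\psi}(z)=\frac{(-1)^N i^{-N}}{2\pi}\int_{-M}^{M}e^{izx}\psi^{(N)}(x)\,dx. $$
Bounding the integrand by $|e^{izx}|\le e^{M|\mathrm{Im}(z)|}$ then gives $|z|^N|\widehat{\psi}(z)|\le C_N\, e^{M|\mathrm{Im}(z)|}$ with $C_N=\frac{1}{2\pi}\|\psi^{(N)}\|_{L^1}$.

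Finally I would convert the polynomial-in-$|z|$ bounds into the stated $(1+|z|)^{-N}$ form. Combining the estimates for the exponents $0,1,\dots,N$ via the binomial expansion $(1+|z|)^N=\sum_{k=0}^N\binom{N}{k}|z|^k$ produces a constant $c_N=\sum_{k=0}^N\binom{N}{k}C_k$ with $(1+|z|)^N|\widehat{\psi}(z)|\le c_N\, e^{M|\mathrm{Im}(z)|}$, which is exactly the asserted inequality. Restricting to the strip $H$, where $|\mathrm{Im}(z)|\le 1$, the factor $e^{M|\mathrm{Im}(z)|}\le e^{M}$ is bounded, so $\widehat{\psi}$ is rapidly decreasing on $H$. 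The only points requiring care are the justification of differentiating under the integral and the vanishing of the boundary terms, and both are routine consequences of the compact support; I therefore do not expect a genuine obstacle, as this is a classical result whose content lies entirely in the bookkeeping above.
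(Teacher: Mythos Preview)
Your proof is correct and is precisely the standard argument for this classical result. The paper does not supply its own proof of the Paley--Wiener theorem; it simply states the result and cites H\"ormander, so there is nothing to compare against beyond noting that your argument is the conventional one.
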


\begin{theorem}[Weyl law]
\cite{GnutzmannSmilansky06} or \cite{Grieser07}
\label{Weyl_law} 

Let $G$ be a quantum graph with non-Robin type boundary conditions at all vertices and spectrum $\{k_n^2\}_n$. Then one can estimate 
the number of eigenvalues in any interval $(K_0,K_1)$ by
\begin{align} 
 \left|\# \{ k_n | K_0 < k_n < K_1 \}- \frac{\mathcal{L}}{\pi}(K_1-K_0)\right| < 2E
\end{align}
\end{theorem}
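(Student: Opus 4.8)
The plan is to reduce the counting of eigenvalues to counting zeros of a secular determinant and then to exploit a strict monotonicity of the associated eigenphases. Recall that $k^2$ (with $k>0$) is an eigenvalue of $G$ exactly when the homogeneous system coming from the boundary conditions $AF+BF'=0$ admits a nontrivial solution; writing the restriction of a candidate eigenfunction to each edge in the basis $e^{\pm ikx}$ and eliminating the coefficients, this condition becomes
\begin{align}
\det\!\big(I-U(k)\big)=0,\qquad U(k):=S\,T(k),
\end{align}
and the multiplicity of the eigenvalue equals $\dim\ker\!\big(I-U(k)\big)$; this is the standard secular equation, see \cite{KostrykinSchrader03}. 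Because $(A,B)$ has full rank with $AB^{*}=0$, the $S$-matrix of Definition \ref{S-matrix} is unitary and independent of $k$, while $T(k)$ is unitary, so $U(k)$ is a $k$-analytic family of $2E\times 2E$ unitary matrices. By analytic perturbation theory I would choose its eigenvalues as real-analytic functions $e^{i\theta_{1}(k)},\dots,e^{i\theta_{2E}(k)}$ with real-analytic phases $\theta_{j}(k)$, so that an eigenvalue $k_{n}$ of $G$ contributes to the count precisely as often as some $\theta_{j}$ passes through $2\pi\Z$.

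Two ingredients then drive the count. First, a direct computation from the block form of $T(k)$ gives $T'(k)=i\widehat\Lambda\,T(k)$ with the positive diagonal matrix $\widehat\Lambda=\operatorname{diag}\big(L(1),\dots,L(E),L(1),\dots,L(E)\big)$, whence
\begin{align}
U'(k)=i\,M\,U(k),\qquad M:=S\,\widehat\Lambda\,S^{*},
\end{align}
and $M$ is Hermitian and \emph{positive definite} because all edge lengths are positive. A Hellmann--Feynman computation for the analytic eigenpair $U(k)v_{j}=e^{i\theta_{j}}v_{j}$ with $\|v_{j}\|=1$ then yields $\theta_{j}'(k)=\langle v_{j},Mv_{j}\rangle>0$, so every eigenphase is strictly increasing in $k$. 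Second, the same block structure gives $\det U(k)=\det S\cdot(-1)^{E}e^{2ik\mathcal{L}}$, so, tracking the phases continuously,
\begin{align}
\sum_{j=1}^{2E}\theta_{j}(K_{1})-\sum_{j=1}^{2E}\theta_{j}(K_{0})=2\mathcal{L}\,(K_{1}-K_{0}).
\end{align}

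It then remains to count. Since each $\theta_{j}$ is continuous and strictly increasing, the number of its crossings of $2\pi\Z$ in $(K_{0},K_{1})$ differs from $\big(\theta_{j}(K_{1})-\theta_{j}(K_{0})\big)/2\pi$ by less than $1$. Summing over the $2E$ phases and inserting the total-phase identity would give
\begin{align}
\Big|\#\{k_{n}\mid K_{0}<k_{n}<K_{1}\}-\frac{\mathcal{L}}{\pi}(K_{1}-K_{0})\Big|<2E,
\end{align}
which is the assertion. The point needing the most care is the identification of the spectral multiplicity of $k_{n}$ with $\dim\ker\!\big(I-U(k_{n})\big)$ together with the continuous selection of the $\theta_{j}$ through eigenphase crossings; here the strict monotonicity $\theta_{j}'>0$ is exactly what prevents a phase from doubling back, so that each of the $2E$ eigenphases can contribute at most one unit of boundary error and the uniform constant $2E$ results. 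I expect the verification of the secular equation and of the multiplicity correspondence to be the main obstacle, the monotonicity being essentially forced by the explicit form of $T(k)$ and the unitarity of $S$.
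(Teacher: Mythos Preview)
The paper does not supply a proof of this theorem; it is quoted as a known result with citations to \cite{GnutzmannSmilansky06} and \cite{Grieser07}. Your argument is correct and is essentially the one given in those references: one passes to the secular equation $\det(I-ST(k))=0$, uses unitarity of $U(k)=ST(k)$ to write its eigenvalues as $e^{i\theta_j(k)}$, shows $\theta_j'(k)=\langle v_j,S\widehat\Lambda S^{*}v_j\rangle>0$, reads off $\sum_j\theta_j'(k)=2\mathcal{L}$ from $\det U(k)$, and then counts crossings of $2\pi\Z$ with an error of less than one per phase. The only caveat worth flagging is that the identification of the spectral multiplicity of $k_n^2$ with $\dim\ker(I-U(k_n))$ holds for $k_n>0$; at $k=0$ the two counts can differ (this is precisely the $m_G(0)$ versus $N$ discrepancy discussed elsewhere in the paper), so strictly speaking your bound is for intervals $(K_0,K_1)\subset(0,\infty)$, which is all that is used later.
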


\begin{definition}
Let $m_G(k)$ denote the multiplicity of the eigenvalue $k^2$ on $G$. Let \mbox{$m_G(k)=0$} if $k^2$ is not an eigenvalue of $G$.
\end{definition}

Consider the $\zeta$-function associated to the quantum graph $G$.
 \begin{align}
\zeta(k):=\det(S\cdot T(k))^{-1/2}\det(Id_{2n}-S\cdot T(k))  
 \end{align}

\begin{proposition}
\cite{KottosSmilansky99} and \cite{KostrykinSchrader06}
The $\zeta$-function is real on the real axis and holomorphic on a strip around the real axis.
For $k > 0$ the zeros of the $\zeta$-function correspond to the eigenvalues of the quantum graph $G$ including multiplicities.  
\end{proposition}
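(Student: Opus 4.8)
The plan is to reduce everything to the single unitary-matrix-valued function $U(k):=S\,T(k)$ and to exploit its unitarity on the real axis. First I would record that the standing hypotheses make $U(k)$ unitary for real $k$. Writing $M:=AA^*+BB^*$, the condition $AB^*=0$ (together with its adjoint $BA^*=0$) gives $(A-iB)(A^*+iB^*)=(A+iB)(A^*-iB^*)=M$, while the full-rank condition on $(A,B)$ makes $M$ invertible; substituting these into $SS^*$ with $S=-(A+iB)^{-1}(A-iB)$ shows $SS^*=Id$. For real $k$ the entries $e^{ikL(j)}$ of $t(k)$ have modulus one, so $t(k)$, and hence the block-swap $T(k)$, are unitary; therefore $U(k)$ is unitary on $\R$.

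Holomorphy is then immediate. Since $\det T(k)=\varepsilon\,e^{2ik\mathcal L}$ with $\mathcal L=\sum_j L(j)$ and $\varepsilon=\pm1$, we get $\det\big(S\,T(k)\big)=\varepsilon\det(S)\,e^{2ik\mathcal L}$, an entire function that never vanishes; it therefore admits an entire, nowhere-vanishing holomorphic square root, explicitly $\det(S\,T(k))^{-1/2}=c\,e^{-ik\mathcal L}$ for a constant $c$ with $|c|=1$. As $\det(Id-S\,T(k))$ is a polynomial in the entire functions $e^{ikL(j)}$, it is entire as well, so $\zeta$ is entire, in particular holomorphic on a strip around the real axis.

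For realness I would use the determinant symmetry of a unitary matrix. On $\R$ put $U=U(k)$, so $U^{-1}=U^*$ and $\overline{\det(Id-U)}=\det(Id-U^*)=\det(Id-U^{-1})$. Then $\det(Id-U)=(-1)^{2E}\det(U)\det(Id-U^{-1})=\det(U)\,\overline{\det(Id-U)}$, and since $|c|=1$ one has $\overline{\det(U)^{-1/2}}=\det(U)^{1/2}$. Combining, $\overline{\zeta(k)}=\det(U)^{1/2}\,\overline{\det(Id-U)}=\det(U)^{-1/2}\det(Id-U)=\zeta(k)$, so $\zeta$ is real on the axis.

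Finally, for the zeros: because the prefactor $\det(S\,T(k))^{-1/2}$ never vanishes, the zeros of $\zeta$ for $k>0$ are exactly those of the secular determinant $\det(Id-S\,T(k))$. I would connect this to the eigenvalue problem by writing any solution of $\Delta f=k^2f$ edgewise as $a_je^{ikx}+b_je^{-ikx}$, encoding the end data $F,F'$ through these amplitudes, and checking that $AF+BF'=0$ holds for a nonzero $f$ precisely when the amplitude vector lies in $\ker(Id-S\,T(k))$; thus $\det(Id-S\,T(k))=0$ characterises the eigenvalues and $\dim\ker(Id-S\,T(k))=m_G(k)$. The main obstacle is matching the analytic order of vanishing of $k\mapsto\det(Id-S\,T(k))$ to this geometric multiplicity. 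I would resolve it by differentiating the unitary family: since $T'(k)=i\mathcal D\,T(k)$ with $\mathcal D$ the diagonal matrix of (strictly positive) edge lengths, first-order perturbation theory shows every eigenphase of $U(k)$ is strictly increasing in $k$, so each eigenvalue equal to $1$ at $k_0$ crosses transversally and contributes a simple factor; hence the order of the zero equals $\dim\ker(Id-U(k_0))=m_G(k)$. This monotonicity argument is exactly the one underlying \cite{KottosSmilansky99} and \cite{KostrykinSchrader06}, which may alternatively be invoked directly for the full correspondence including multiplicities.
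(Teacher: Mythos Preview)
The paper does not supply its own proof of this proposition; it simply records the result and attributes it to \cite{KottosSmilansky99} and \cite{KostrykinSchrader06}. Your argument is therefore not being compared against anything in the paper itself, but rather against the standard derivations in those references, and it matches them closely. The unitarity of $S$ from $AB^*=0$ and full rank, the explicit form $\det(S\,T(k))^{-1/2}=c\,e^{-ik\mathcal L}$ giving entireness, the identity $\det(Id-U)=\det(U)\,\overline{\det(Id-U)}$ for unitary $U$ yielding realness, and the monotonicity of the eigenphases via $-iU^*U'=T^*\mathcal D\,T>0$ to match analytic and geometric multiplicities are exactly the ingredients used in those papers. Your sketch is correct and self-contained; there is nothing further to add from the paper's side.
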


Let $N$ denote the multiplicity of the zero $k=0$ of $\zeta(k)$, in general this multiplicity is different from the multiplicity of the 
eigenvalue zero $m_G(0)$.

\begin{theorem}[the trace formula]
\cite{BolteEndres08} and \cite{BolteEndres09}

Let $G$ be a quantum graph with non-Robin type boundary conditions at all vertices.
 Then the spectrum $\{k_n^2\}_n$ of the Laplacian $\Delta$ determines the 
following exact trace formula.
\begin{align} \label{trace_formula_standard}
 \sum_{n=0}^{\infty}\varphi(k_n)=&\frac{\mathcal{L}}{2\pi}\widehat{\varphi}(0)+\left(m_G(0)-\frac{1}{2}N\right)\varphi(0)
 + \sum_{p \in PO}2Re(\mathcal{A}_p)\widehat{\varphi}(l_p)
\end{align}
Here $\varphi$ is a test function, see definition \ref{test_function}, and $\widehat{\varphi}$ is its Fourier transform. The second sum is over all 
periodic orbits, $l_p$ denotes the length of the periodic orbit $p$.

The coefficients $\mathcal{A}_p$ are given by
\begin{align} \label{periodic_orbit_coefficients}
 \mathcal{A}_p=\tilde{l}_p\prod_{b\in p} S_{bb'}
\end{align}
where $\tilde{l_p}$ is the length of the primitive periodic orbit $\tilde{p}$ that $p$ is a repetition of, $S_{bb'}$ is the coefficient in 
the $S$-matrix (see definition \ref{S-matrix}) that corresponds to the incoming and the outgoing oriented edge at that vertex.
\end{theorem}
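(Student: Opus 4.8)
The plan is to recover the trace formula from the $\zeta$-function by the argument principle, reading off the left-hand side as a sum over zeros and the right-hand side from the analytic structure of $\det(Id - S\cdot T(k))$. Since for $k>0$ the zeros of $\zeta$ counted with multiplicity are exactly the $k_n$, and since $\varphi$ is holomorphic and rapidly decreasing on the strip $H$, I would first write
\begin{align}
 \sum_{n}\varphi(k_n)=\frac{1}{2\pi i}\oint_{C}\varphi(k)\,\frac{\zeta'(k)}{\zeta(k)}\,dk,
\end{align}
where $C$ is a contour hugging the real axis and enclosing the real zeros. The evenness of $\varphi$ together with the symmetry $k\mapsto -k$ of the zero set lets me fold the two horizontal parts of $C$ into a single integral over a line $\R+i\epsilon$, and the rapid decay of $\varphi$ on $H$ guarantees convergence and kills the contributions at infinity.

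Next I would split the logarithmic derivative according to $\zeta=\det(S\cdot T(k))^{-1/2}\det(Id - S\cdot T(k))$. The first factor is elementary: since $\det T(k)$ is proportional to $e^{2ik\mathcal{L}}$ with $\mathcal{L}$ the total length, $\frac{d}{dk}\log\det(S\cdot T(k))^{-1/2}$ is a constant multiple of $\mathcal{L}$, and integrating $\varphi(k)$ against this constant yields the Weyl term $\frac{\mathcal{L}}{2\pi}\widehat{\varphi}(0)$. For the oscillatory part I would expand
\begin{align}
 \log\det(Id - S\cdot T(k))=-\sum_{m=1}^{\infty}\frac{1}{m}\operatorname{tr}\bigl((S\cdot T(k))^{m}\bigr),
\end{align}
valid on the shifted contour where $\|S\cdot T(k)\|<1$; this is precisely why one must stay off the real axis, where $S\cdot T$ is unitary. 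Each trace $\operatorname{tr}((S\cdot T(k))^{m})$ unfolds combinatorially into a sum over closed paths of combinatorial length $m$ on $G$: every such path contributes a phase $e^{ikl}$, where $l$ is its metric length, times the product of the $S$-matrix entries $S_{bb'}$ picked up at the vertices it traverses. Integrating $\varphi(k)e^{ikl}$ along $\R+i\epsilon$ reproduces $\widehat{\varphi}(l)$, and the factor $2\operatorname{Re}$ appears from symmetrizing the $k$ and $-k$ contributions.

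The remaining bookkeeping is to repackage the sum over closed paths as a sum over periodic orbits. A periodic orbit that is the $r$-fold repetition of a primitive orbit of combinatorial length $m_0$ occurs with $m=rm_0$; the factor $\frac1m$ in the logarithmic expansion cancels against the cyclic degeneracy of the starting point, leaving the primitive length $\tilde{l}_p$ as the prefactor and giving the coefficients $\mathcal{A}_p=\tilde{l}_p\prod_{b\in p}S_{bb'}$. I expect the main obstacle to be the contour and convergence analysis at the two endpoints: making the interchange of summation and integration rigorous requires controlling the log-determinant expansion uniformly as $\epsilon\to 0$, and the origin must be handled separately, since the multiplicity $N$ of the zero of $\zeta$ at $k=0$ is not the eigenvalue multiplicity $m_G(0)$. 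Tracking this discrepancy through the contour integral is what produces the correction term $\bigl(m_G(0)-\tfrac12 N\bigr)\varphi(0)$.
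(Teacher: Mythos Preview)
The paper does not give its own proof of this theorem: it is quoted from \cite{BolteEndres08} and \cite{BolteEndres09}, with only a remark afterwards that the cited result is more general (covering Robin-type conditions as well) and specializes to the form stated here when the $S$-matrix is $k$-independent. So there is nothing in the paper to compare against beyond that attribution.

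Your outline is the standard argument-principle derivation that the cited references themselves use (going back to Kottos--Smilansky and Roth): locate the eigenvalues as zeros of $\zeta(k)$, write $\sum_n\varphi(k_n)$ as a contour integral of $\varphi\,\zeta'/\zeta$, split off the Weyl term from $\det(S\cdot T(k))^{-1/2}$, expand $\log\det(Id-S\cdot T(k))$ as a trace series on the shifted line where $T(k)$ is contractive, and read off the periodic-orbit expansion. One small imprecision: the primitive length $\tilde{l}_p$ does not come solely from the $\tfrac{1}{m}$ cancelling the cyclic degeneracy; it is the derivative in $\zeta'/\zeta$ that brings down the factor $il_p$, and then $\tfrac{1}{m}\cdot m_0\cdot l_p = \tilde{l}_p$ (with $m=r m_0$, $l_p=r\tilde{l}_p$). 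The places you flag as delicate --- justifying the interchange of sum and integral uniformly as the contour approaches the real axis, and isolating the $k=0$ contribution to obtain the $(m_G(0)-\tfrac12 N)\varphi(0)$ correction --- are exactly the points the cited papers spend effort on, and your identification of them is accurate. In short, your sketch is correct and matches the approach of the sources the paper defers to.
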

The trace formula in the papers cited above is more general, their version holds for both Robin and non-Robin type boundary conditions.
There is a trace formula for non-Robin type boundary conditions in \cite{KPS07}, it corresponds to this one 
with the particular test function $\varphi(x)=e^{-tx^2}$. Another precursor to this trace formula is in \cite{KottosSmilansky99}, 
it is also in a distributional form but only allows for a specific set of boundary conditions.

\subsection{The length-spectrum of a  quantum graph}

We will now define the notion of length-spectrum of a quantum graph. 

The naive first idea would be to list all the lengths of periodic orbits and repeat 
lengths according to how many periodic orbits of the given length there are. Under this definition Huber's theorem does not hold for 
quantum graphs. 

The following example of two Laplace-isospectral quantum graphs is from \cite{BSS06}. 

\begin{figure}[ht]
\centering
\scalebox{0.4}{\includegraphics{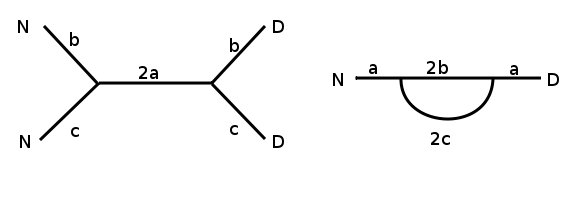}}
\caption{two isospectral quantum graphs}
\label{isospectral_example}
\end{figure}

The $N$ and $D$ stand for Neumann and Dirichlet boundary conditions at the vertex, all inner vertices have Kirchhoff-Neumann 
boundary conditions. The numbers $a,b,c$ are positive real numbers that correspond to the edge lengths.
The quantum graph on the right has periodic orbits of length $2a$ while the one on the left does not.

The proper  definition of length-spectrum assigns a weight to each periodic orbit that depends on the boundary conditions 
at the vertices.

\begin{definition}
\label{def_length_spectrum}
 The {\bf length-spectrum} of a quantum graph is the list of lengths of periodic orbits, each weighted with the factor 
\begin{align}
 A_G(l):= \sum_{p\in PO:{ } l_p=l} 2Re(\mathcal{A}_p)
\end{align}
where the coefficient $\mathcal{A}_p$ is from the trace formula, equation \ref{periodic_orbit_coefficients}. 
 If there are no periodic orbits of length $l$ we set $A_G(l)=0$.

If two quantum graphs have the same length-spectrum we say they are {\bf length-isospectral}.
\end{definition}

\begin{remark}
 Note that in the above definition the weight of a single periodic orbit and thus the weight at a particular length can be negative. 
The graph on the right in figure \ref{isospectral_example} has two periodic orbits of length $2a$ but their weights sum to 
zero, the two graphs are length-isospectral.
\end{remark}

\begin{corollary}[Huber's theorem]
 The length-spectrum and the eigenvalue-spectrum (without the eigenvalue zero) of a quantum graph determine each other.
 
 In particular, Laplace-isospectrality implies length-isospectrality and vice versa.
\end{corollary}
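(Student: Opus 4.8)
The plan is to read Huber's theorem as a direct consequence of the trace formula \eqref{trace_formula_standard}, viewed as an identity between two distributions indexed by a quantum graph $G$. Concretely, I would regard the left-hand side $\sum_n \varphi(k_n)$ as a linear functional on test functions determined by the eigenvalue-spectrum, and the right-hand side as a linear functional determined by the total length $\mathcal{L}$, the multiplicities at zero, and the length-spectrum data $\{A_G(l)\}_l$ from Definition \ref{def_length_spectrum}. Since the formula holds for \emph{every} test function $\varphi$, the two functionals coincide, so each side determines the other up to the information that is not separated by the test functions.

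Next I would make the two implications precise. For the forward direction, suppose two quantum graphs $G_1,G_2$ are Laplace-isospectral, so their eigenvalue-spectra agree with multiplicities away from possibly $k=0$. Then for every test function $\varphi$ the left-hand sides agree except for the contribution of the eigenvalue zero, which feeds only into the $\varphi(0)$ term. By the Weyl law (Theorem \ref{Weyl_law}) the total lengths $\mathcal{L}_1,\mathcal{L}_2$ are each recovered from the counting asymptotics of the spectrum, hence agree. Subtracting the two trace formulae therefore forces
\begin{align}
\sum_{p}\bigl(2Re(\mathcal{A}_p^{(1)})-2Re(\mathcal{A}_p^{(2)})\bigr)\widehat{\varphi}(l_p)=c\,\varphi(0)
\end{align}
for some constant $c$ absorbing the $\bigl(m_{G_i}(0)-\tfrac12 N_i\bigr)$ discrepancy. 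The key step is then a \textbf{linear independence / Fourier-injectivity} argument: the functionals $\varphi\mapsto\widehat{\varphi}(l)$ for distinct positive lengths $l$, together with $\varphi\mapsto\varphi(0)=\int\widehat{\varphi}$, are linearly independent on the test-function class. Because $\widehat{\varphi}$ ranges over all smooth compactly supported functions (Paley–Wiener, Theorem \ref{Paley-Wiener}), I can choose $\widehat{\varphi}$ to be a bump supported near a single length $l_0>0$ and vanishing at $0$, which isolates $A_{G_1}(l_0)=A_{G_2}(l_0)$ for each $l_0>0$. This yields length-isospectrality. The reverse implication is the mirror image: length-isospectrality plus equality of $\mathcal{L}$ (which is itself part of the $l=0$ / leading term data) makes the right-hand sides agree up to the $\varphi(0)$ term, and evaluating against test functions whose Fourier transforms approximate point masses at the $k_n$ recovers the eigenvalue multiplicities, again except at $k=0$.

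The main obstacle I anticipate is handling the eigenvalue zero cleanly. The quantity $N$ (the multiplicity of the zero of $\zeta$) need not equal the geometric multiplicity $m_G(0)$, so the coefficient $\bigl(m_G(0)-\tfrac12 N\bigr)$ of $\varphi(0)$ mixes genuine spectral data at zero with an artifact of the $\zeta$-function normalisation. I would therefore be careful to route \emph{all} ambiguity at $k=0$ into this single $\varphi(0)$ term and show that it cannot leak into any $\widehat{\varphi}(l_p)$ with $l_p>0$; this is exactly what the separation of supports in the Paley–Wiener step accomplishes, since $\varphi(0)=\int_{\R}\widehat{\varphi}$ is a functional of $\widehat{\varphi}$ that I can kill while keeping $\widehat{\varphi}$ nonzero near any prescribed positive length. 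A secondary technical point is justifying that the periodic-orbit sum converges and that term-by-term matching against test functions is legitimate; this follows from the rapid decay of $\widehat{\varphi}$ on the strip $H$ guaranteed by Paley–Wiener, combined with the exponential growth control on the orbit coefficients $\mathcal{A}_p$. With these in hand the corollary reduces to the injectivity of the Fourier transform on the relevant function class.
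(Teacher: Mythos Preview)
Your proposal is correct and follows essentially the same approach as the paper, which simply records that the corollary ``follows directly from the trace formula, equation \eqref{trace_formula_standard}.'' You have spelled out in considerably more detail the Paley--Wiener separation-of-supports argument that the paper leaves implicit.
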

\begin{proof}
 This follows directly from the trace formula, equation \ref{trace_formula_standard}.
\end{proof}

\section{The eigenvalue-spectrum}

\begin{corollary}
\label{main_trace}
We can rewrite the trace formula as follows.
\begin{align}
\sum_{k \in \R_{> 0}}m_G(k)\varphi(k)=&\mathcal{L}\widehat{\varphi}(0)-\frac{1}{2}N\varphi(0)
 + \sum_{l \in \R_{>0}}A_G(l)\widehat{\varphi}(l)
 \end{align}
Note that $m_G(k)$ and $A_G(l)$ are nonzero on a discrete set, so if $\varphi$ is a test function both sides of the equation 
converge absolutely.  
\end{corollary}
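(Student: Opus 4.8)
The plan is to treat Corollary \ref{main_trace} as a purely formal rearrangement of the trace formula \eqref{trace_formula_standard}, consisting of three bookkeeping steps together with a check that every series involved converges absolutely, so that the rearrangements are legitimate. The three changes are: reindexing the spectral sum from the counting index $n$ to the distinct spectral parameters $k$ weighted by multiplicity, splitting off and cancelling the $k=0$ contribution, and collecting the periodic-orbit sum according to common length.

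First I would reindex the spectral sum. The index $n$ runs over all eigenvalues $k_n^2$ counted with multiplicity, so collecting equal values of $k_n$ turns $\sum_{n=0}^{\infty}\varphi(k_n)$ into a sum over the distinct nonnegative spectral parameters, each weighted by its multiplicity $m_G(k)$. Separating the contribution of $k=0$ gives $\sum_{n=0}^{\infty}\varphi(k_n) = m_G(0)\varphi(0) + \sum_{k \in \R_{>0}} m_G(k)\varphi(k)$. Moving $m_G(0)\varphi(0)$ to the right-hand side of \eqref{trace_formula_standard} cancels the $m_G(0)$ inside the coefficient $\left(m_G(0)-\tfrac{1}{2}N\right)$, leaving exactly $-\tfrac{1}{2}N\varphi(0)$; the leading term and the $N$-term are otherwise carried over from the trace formula.

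Second I would regroup the periodic-orbit sum by length. Every periodic orbit has $l_p > 0$, and for a fixed length $l$ only finitely many periodic orbits can contribute: a closed walk of metric length $l$ uses at most $l/L_{\min}$ edges, where $L_{\min}$ is the shortest edge length, so there are only finitely many combinatorial closed walks realizing that length. Hence $A_G(l) = \sum_{p \in PO:\, l_p = l} 2Re(\mathcal{A}_p)$ is a finite sum, and grouping the terms of $\sum_{p \in PO} 2Re(\mathcal{A}_p)\widehat{\varphi}(l_p)$ according to their common length yields $\sum_{l \in \R_{>0}} A_G(l)\widehat{\varphi}(l)$, which is precisely the weighting from Definition \ref{def_length_spectrum}.

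The one genuine point to verify, and what I expect to be the only real obstacle, is \emph{absolute} convergence, which is what licenses both the reindexing and the regrouping above and which is asserted in the closing note. For the left-hand side, Theorem \ref{Weyl_law} bounds the number of spectral parameters (with multiplicity) in any unit interval by $\mathcal{L}/\pi + 2E$, so the sum is dominated by a constant times $\sum_{m}\sup_{k\in[m,m+1]}|\varphi(k)|$, which converges because a test function is rapidly decreasing on $\R$. For the right-hand side, holomorphy and rapid decay of $\varphi$ on the strip $H$ let one shift the contour in the Fourier integral to $\mathrm{Im}\,x = \pm 1$ and obtain exponential decay $|\widehat{\varphi}(l)| \le c_N(1+|l|)^{-N}e^{-|l|}$, in the spirit of Theorem \ref{Paley-Wiener}; combined with the fact that \eqref{trace_formula_standard} is already an absolutely convergent identity for test functions, this guarantees that collecting the periodic-orbit terms by length does not change the value of the sum. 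With absolute convergence in hand, the three steps assemble into the stated identity.
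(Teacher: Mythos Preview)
Your argument is correct and is exactly the kind of bookkeeping the paper has in mind; in fact the paper gives no proof at all for this corollary, treating it as an immediate regrouping of \eqref{trace_formula_standard}. Your three steps---reindexing the spectral side by multiplicity, cancelling the $m_G(0)\varphi(0)$ contribution, and collecting periodic orbits by common length---together with the absolute-convergence check are precisely what is needed, and your justification is more detailed than anything the paper supplies.

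One small point worth flagging: you write that ``the leading term \ldots\ [is] carried over from the trace formula'', but note that \eqref{trace_formula_standard} has $\tfrac{\mathcal{L}}{2\pi}\widehat{\varphi}(0)$ whereas the corollary has $\mathcal{L}\widehat{\varphi}(0)$. This factor-of-$2\pi$ discrepancy is almost certainly a typographical inconsistency in the paper rather than a gap in your reasoning (and it is irrelevant to the applications in Theorems \ref{eigenvalue_spectrum} and \ref{length_spectrum}, where the $\mathcal{L}$-terms cancel), but strictly speaking your proof does not account for it.
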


\begin{remark}
 \label{eigenvalue_zero}
Notice that the multiplicity of the eigenvalue zero appears on both sides off the trace formula and cancels out. 

Consider the unit interval with either Dirichlet or Neumann boundary conditions at both ends. Then these two quantum graphs have the same length-spectrum and the same eigenvalue-spectrum except the eigenvalue zero, which has multiplicity one for Neumann boundary conditions and multiplicity zero for Dirichlet boundary conditions. 

This is an example of a more general phenomenon. Pick a quantum graph $G$ and create a new graph $G'$ by switching the roles of the matrices $A$ and $B$ in the boundary conditions. Then $S' = \overline{S}$ so these graphs have the same length-spectrum. Differentiating maps the eigenfunctions of one to the eigenfunctions of the other, thus $G$ and $G'$ have the same eigenvalue-spectrum away from zero. However, the multiplicity of the eigenvalue zero does not have to agree in general.
\end{remark}

\begin{remark}
We have 
\begin{align}
 \frac{1}{K}\sum_{k \in [0,K]}m_G(k) \sim \frac{\mathcal{L}}{\pi}
\end{align} 
as $K \ra \infty$ because $G$ has approximately $\frac{\mathcal{L}K}{\pi}$ eigenvalues in the interval $[0,K]$ by 
the Weyl law, theorem \ref{Weyl_law}.
\end{remark}

\begin{theorem}
\label{eigenvalue_spectrum}

 Let $G$ and $G'$ be two quantum graphs with non-Robin type boundary conditions at all vertices. 
If 
\begin{align}
\lim_{K \ra \infty} \frac{1}{K}\sum_{k \in [0,K]}\left|m_G(k)-m_{G'}(k)\right| = 0
\end{align}
then $G$ and $G'$ are Laplace-isospectral (with the possible exception of the multiplicity of the eigenvalue zero) and length-isospectral. Here $m_G(k)$ is the multiplicity of the eigenvalue $k^2$ of $G$.
\end{theorem}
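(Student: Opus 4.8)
The plan is to subtract the trace formula of Corollary \ref{main_trace} for $G'$ from that for $G$. Writing $\Delta m=m_G-m_{G'}$ and $\Delta A=A_G-A_{G'}$, this gives, for every test function $\varphi$,
\begin{align}
\sum_{k\in\R_{>0}}\Delta m(k)\,\varphi(k)=(\mathcal{L}-\mathcal{L}')\widehat\varphi(0)-\tfrac12(N-N')\varphi(0)+\sum_{l\in\R_{>0}}\Delta A(l)\,\widehat\varphi(l).
\end{align}
The hypothesis is precisely that $D(K):=\sum_{k\le K}|\Delta m(k)|=o(K)$. First I would record that $\mathcal{L}=\mathcal{L}'$: by the Weyl law (Theorem \ref{Weyl_law}) the averages $\tfrac1K\sum_{k\le K}m_G(k)$ and $\tfrac1K\sum_{k\le K}m_{G'}(k)$ tend to $\mathcal{L}/\pi$ and $\mathcal{L}'/\pi$, while their difference is bounded by $\tfrac1K D(K)\ra 0$.

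The heart of the argument is to show $\Delta A(l_0)=0$ for each fixed $l_0>0$, giving length-isospectrality. Morally $\Delta A(l_0)$ is the Bohr coefficient of the right-hand side at frequency $l_0$; extracting it from the left-hand side produces a Ces\`aro average of $\Delta m$ weighted by $\cos(l_0 k)$, which must vanish because $\Delta m$ has density zero. To make this rigorous I would fix an even $\eta$ in the test-function class with $\widehat\eta\ge 0$ compactly supported and $\widehat\eta(0)=1$ (such $\eta$ exists by Paley--Wiener, Theorem \ref{Paley-Wiener}), and apply the difference formula to the family $\varphi_T(x)=\tfrac1T\eta(x/T)\cos(l_0 x)$, a valid test function whose transform $\widehat{\varphi_T}(\xi)=\tfrac12[\widehat\eta(T(\xi-l_0))+\widehat\eta(T(\xi+l_0))]$ concentrates at $\pm l_0$ as $T\ra\infty$. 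On the right-hand side $\widehat{\varphi_T}(0)\ra 0$ and $\varphi_T(0)=\eta(0)/T\ra 0$ kill the length and the $N$ terms, and since the periodic-orbit lengths form a locally finite set the length sum tends to $\tfrac12\Delta A(l_0)$. On the left-hand side I would bound the sum by $\tfrac1T\sum_k|\Delta m(k)|\,|\eta(k/T)|$ and show that this tends to $0$.

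The main obstacle is exactly this last limit: converting the averaged hypothesis $D(K)=o(K)$ into genuine decay of the weighted sum $\tfrac1T\sum_k|\Delta m(k)|\,|\eta(k/T)|$. I expect to handle it by writing the sum as a Stieltjes integral against $dD$, integrating by parts, and using the rapid decrease of $\eta$ together with $D(K)=o(K)$ to pass to the limit by dominated convergence; one must also check that the concentrating transforms really isolate the single length $l_0$ among the discrete periodic-orbit lengths. Granting this, the equation forces $\tfrac12\Delta A(l_0)=0$, hence $\Delta A\equiv 0$ and the graphs are length-isospectral.

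Finally, once $\Delta A\equiv 0$ and $\mathcal{L}=\mathcal{L}'$, the difference formula collapses to $\sum_{k>0}\Delta m(k)\varphi(k)=-\tfrac12(N-N')\varphi(0)$ for all test functions. Reading this as an identity of tempered distributions, the even atomic measure $\sum_{k>0}\Delta m(k)(\delta_k+\delta_{-k})$ equals a multiple of $\delta_0$; since the test functions are dense in the even Schwartz space and the eigenvalues are discrete and bounded away from $0$, a support argument forces $\Delta m(k)=0$ for every $k>0$ (and incidentally $N=N'$). This is Laplace-isospectrality away from the eigenvalue zero, which together with $\Delta A\equiv 0$ completes the proof.
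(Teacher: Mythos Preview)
Your proposal is correct and is essentially the paper's own proof. The paper constructs the test function from the length side as $\check{\psi}_K$ with $\psi_K(l)=\psi(K(l-l_0))+\psi(K(l+l_0))$ for a bump $\psi$, which computes to $\check{\psi}_K(k)=\tfrac{4\pi}{K}\widehat{\psi}(k/K)\cos(kl_0)$; this is exactly your $\varphi_T$ with $\eta=4\pi\widehat{\psi}$ and $T=K$, so $\widehat\eta$ compactly supported is built in. The only cosmetic difference is in handling the eigenvalue side: the paper splits $\sum_k|\Delta m(k)|\,|\widehat\psi(k/K)|$ into blocks $k\in[jK,(j+1)K]$ and uses $|\widehat\psi(x)|\le C(1+|x|)^{-3}$ together with the hypothesis, whereas you propose Stieltjes integration by parts against $dD$; both arguments are standard and give the same conclusion.
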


\begin{proof}

If the limit above is zero, then $G$ and $G'$ have the same eigenvalue asymptotics, so by the Weyl law, theorem \ref{Weyl_law}, 
we have $\mathcal{L}=\mathcal{L}'$.

 Let $\psi \in C_c^{\infty}(\R)$ such that $\supp(\psi) \subset [-1,1]$, $\psi(0)=1$ and $\psi$ is even. 
Fix $l_0 \in \R$. Let
\begin{align}
 \psi_K(l):=\psi(K(l-l_0))+\psi(K(l+l_0))
\end{align}
for $K$ a large parameter (the idea to consider this kind of test function is adapted from \cite{Kelmer11}). Then 
$\check{\psi}_K(k)=2\pi \widehat{\psi_K}(k)$ because $\psi_K$ is even. 
By the Paley-Wiener 
theorem (theorem \ref{Paley-Wiener}), $\widehat{\psi_K}$ is holomorphic and rapidly decreasing. Thus $\check{\psi}_K$ is a valid test 
function (see definition \ref{test_function}) for the trace formula.
We have \mbox{$\check{\psi}_K(k)=\frac{4\pi}{K}\widehat{\psi}(\frac{k}{K})\cos(kl_0)$}. 
We plug $\check{\psi}_K$ in the difference of the trace formulas (corollary \ref{main_trace}) for $G$ and $G'$ and obtain
\begin{equation}\label{trace_K}\left. \begin{split}
&\frac{4\pi}{K}\sum_{k \in \R_{> 0}}\left(m_G(k)-m_{G'}(k)\right)\widehat{\psi}(\frac{k}{K})\cos(kl_0)\\
=&-\frac{1}{2}(N-N')\frac{4\pi}{K}\widehat{\psi}(0)
 + \sum_{l \in \R_{>0}}\left(A_G(l)-A_{G'}(l)\right)\psi_K(l) \end{split} \right.
\end{equation}

As $\widehat{\psi}$ is rapidly decreasing there exists a constant $C$ such that 
\mbox{$\left|\widehat{\psi}(x)\right| < \frac{C}{(1+|x|)^3}$}. We can bound 
\begin{align}
& \left| \frac{4\pi}{K}\sum_{k \in \R_{\ge 0}}\left(m_G(k)-m_{G'}(k)\right)\widehat{\psi}(\frac{k}{K})\cos(kl_0) \right| \\
<& \frac{4\pi}{K}\sum_{j=0}^{\infty} \sum_{k \in [jK,(j+1)K]}\left|m_G(k)-m_{G'}(k)\right|\cdot  \frac{C}{(1+(\frac{k}{K})^3} \\
<& 4C\pi\sum_{j=0}^{\infty} \sum_{k \in [jK,(j+1)K]}\frac{1}{K}\left|m_G(k)-m_{G'}(k)\right|\cdot \frac{1}{(1+j)^3} \\
\le& 4C\pi\sum_{j=0}^{\infty}\frac{1}{(j+1)^2} \sum_{k \in [0,(j+1)K]}\frac{\left|m_G(k)-m_{G'}(k)\right|}{(j+1)K} \\
\le& 4C\pi\sum_{j=0}^{\infty}\frac{1}{(j+1)^2} \sup_{K'\ge K}\sum_{k \in [0,K']}\frac{\left|m_G(k)-m_{G'}(k)\right|}{K'} \\
<& 4C\pi \frac{\pi^2}{6} \sup_{K'\ge K}\sum_{k \in [0,K']}\frac{\left|m_G(k)-m_{G'}(k)\right|}{K'} \\
\ra& 0
\end{align}
for $K \ra \infty$ by the initial assumption.

For $K$ sufficiently large we have
\begin{align}
& \sum_{l \in \R_{>0}}\left(A_G(l)-A_{G'}(l)\right)\psi_K(l) \\
=& \left(A(l_0,G)-A(l_0,G')\right)\psi_K(l_0)\\
=& \left(A(l_0,G)-A(l_0,G')\right)
\end{align}
because the $l$ with $A_G(l) \neq 0$ are discrete and $\psi_K$ is supported around $l_0$ and $-l_0$.
As all other terms in equation \eqref{trace_K} go to zero for $K \ra \infty$ this implies $A(l_0,G)=A(l_0,G')$ for all $l_0$.
Thus we have shown that $G$ and $G'$ are length-isospectral. If we look at the difference of the trace formulae \eqref{trace_K} 
and simplify we get
\begin{align}
&\sum_{k \in \R_{> 0}}\left(m_G(k)-m_{G'}(k)\right)\varphi(k)=-\frac{1}{2}(N-N')\varphi(0)
 \end{align}
By looking at a test function supported in a small enough neighborhood around $k$ we see that $m_G(k)=m_{G'}(k)$ for 
all $k\neq 0$ and $N=N'$ thus $G$ and $G'$ are Laplace-isospectral, with the possible exception of the eigenvalue zero.

\end{proof}

The following example shows that our bound on the exceptional set is best possible.

\begin{example}
Fix a value for $n$ and consider the following two quantum graphs, all edges have length one, all vertices have Kirchhoff-Neumann 
boundary conditions.

\begin{figure}[ht]
\centering
\scalebox{0.4}{\includegraphics{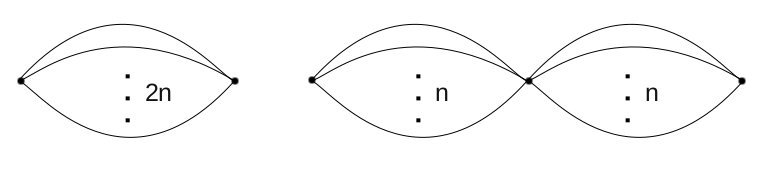}}
\caption{two quantum graphs whose spectra differ by a small proportion}
\end{figure}

Both quantum graphs are connected so $0$ is an eigenvalue with multiplicity one in both of them.

Let $k\in \N_{>0}$, then $k^2\pi^2$ is an eigenvalue of multiplicity $2n$ for the quantum graph on the left side. Assume all edges 
are parametrized from left 
to right as the interval $[0,1]$. Then there is one eigenfunction of the form $\cos(\pi k x)$ on all edges and a $(2n-1)$ dimensional 
eigenspace of functions of the form $a_e\sin(\pi k x)$ on each edge where the weights $a_e$ sum to zero. 

For the quantum graph on the right again assume that all edges are parametrized from left to right as the interval $[0,1]$ and 
$k\in \N_{>0}$. There is one eigenfunction of the form $\cos(\pi  k x/2)$ on the 
edges on the left and $\cos(\pi k x/2+1/2)$ on the edges on the right. There is a $(2n-2)$ dimensional eigenspace with eigenfunctions of the 
form $a_e\sin(\pi k x)$ on all edges where the sum of the $a_e$ over all edges on the right is zero and the sum over the $a_e$ over all 
edges on the left is zero.
Thus the eigenvalues are the numbers $k^2\pi^2$ for $k \in \N_{>0}$ with multiplicity $2n-1$ union the numbers $k^2\pi^2/4$ for $k$ an 
odd positive integer. 

This means these two quantum graphs satisfy
\begin{align}
\lim_{K \ra \infty} \frac{1}{K}\sum_{k \in [0,K]}|m_G(k)-m_{G'}(k)| = \frac{2}{n}
\end{align}
which can be made arbitrarily small by choosing $n$ big enough.

\end{example}

\section{The length-spectrum}

\begin{theorem}
\label{length_spectrum}
 Let $G$ and $G'$ be quantum graphs with non-Robin type boundary conditions at all vertices. If
\begin{align}
\lim_{L \ra \infty} \frac{1}{L}\sum_{l \in [0,L]}|A_G(l)-A_{G'}(l)| = 0
\end{align}
then $G$ and $G'$ are Laplace-isospectral (with the possible exception of the multiplicity of the eigenvalue zero) and length-isospectral.
\end{theorem}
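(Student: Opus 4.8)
The plan is to run the argument of Theorem~\ref{eigenvalue_spectrum} with the roles of the eigenvalue-side and the length-side interchanged: I would feed the trace formula a test function that is sharply concentrated near a single spectral point $\pm k_0$ while its Fourier transform is smeared out over a length-window of size $K$, so that the periodic-orbit sum is controlled by the hypothesis and the spectral sum isolates $m_G(k_0)$. Concretely, fix an even $\eta\in C_c^\infty(\R)$ with $\supp(\eta)\subset[-1,1]$ and $\int_\R\eta\neq 0$, fix $k_0\in\R_{>0}$, and prescribe the test function through its Fourier transform by $\widehat\varphi(l):=\frac1K\eta(l/K)\cos(k_0 l)$. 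This is even and compactly supported in $l$, so by the Paley-Wiener theorem (Theorem~\ref{Paley-Wiener}) its inverse transform $\varphi=\check{\widehat\varphi}$ is holomorphic and rapidly decreasing on the strip, hence a valid test function. A direct computation gives $\varphi(k)=\tfrac12\bigl[\check\eta(K(k-k_0))+\check\eta(K(k+k_0))\bigr]$, a pair of bumps of height $O(1)$ and width $1/K$ centred at $\pm k_0$.

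Next I would dispose of the right-hand side of the trace-formula difference (Corollary~\ref{main_trace}). The periodic-orbit sum is bounded by $\|\eta\|_\infty\cdot\frac1K\sum_{l\in[0,K]}\lvert A_G(l)-A_{G'}(l)\rvert$, which tends to $0$ by hypothesis; because $\eta$ has compact support this is even simpler than in Theorem~\ref{eigenvalue_spectrum}, involving a single scale rather than a sum over dyadic blocks. The total-length term is $(\mathcal{L}-\mathcal{L}')\widehat\varphi(0)=(\mathcal{L}-\mathcal{L}')\frac1K\eta(0)\to0$, so, unlike in the eigenvalue proof, I do not need $\mathcal{L}=\mathcal{L}'$ beforehand; it falls out for free. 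Finally $\varphi(0)=\check\eta(Kk_0)$ decays faster than any power of $K$ since $\check\eta$ is rapidly decreasing, so the term $-\tfrac12(N-N')\varphi(0)$ also vanishes. Hence the whole right-hand side tends to $0$.

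It remains to identify the limit of the left-hand side $\sum_{k>0}(m_G(k)-m_{G'}(k))\varphi(k)$, and this is where I expect the real work to be. Since $\varphi$ is holomorphic it cannot be compactly supported, so, in contrast to the clean length-isolation in Theorem~\ref{eigenvalue_spectrum}, infinitely many eigenvalues feel the tails of $\varphi$. The main obstacle is therefore a tail estimate: choosing $\delta>0$ smaller than the distance from $k_0$ to the nearest other eigenvalue of either graph, the eigenvalues with $\lvert k-k_0\rvert\ge\delta$ contribute at most $\sum_{k}(m_G+m_{G'})(k)\,\lvert\check\eta(K(k-k_0))\rvert$, which I would control by grouping eigenvalues into unit intervals, bounding their number there by the Weyl law (Theorem~\ref{Weyl_law}) and using $\lvert\check\eta(u)\rvert\le c_N(1+\lvert u\rvert)^{-N}$ to obtain a bound of order $1/K$. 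Only $k=k_0$ then survives in the limit, and the $-k_0$ bump is irrelevant to the sum over $k>0$, so the left-hand side converges to $\tfrac12(m_G(k_0)-m_{G'}(k_0))\int_\R\eta$. As the right-hand side tends to $0$ and $\int_\R\eta\neq0$, this forces $m_G(k_0)=m_{G'}(k_0)$ for every $k_0>0$: the two graphs are Laplace-isospectral away from zero.

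For the remaining conclusions I would feed the now-simplified identity $0=(\mathcal{L}-\mathcal{L}')\widehat\varphi(0)-\tfrac12(N-N')\varphi(0)+\sum_{l>0}(A_G(l)-A_{G'}(l))\widehat\varphi(l)$, valid for every test function $\varphi$, into the length-isolating family $\check\psi_K$ built in the proof of Theorem~\ref{eigenvalue_spectrum}: these have $\widehat\varphi(0)=0$ and $\varphi(0)\to0$, so they isolate $A_G(l_0)-A_{G'}(l_0)$ and yield length-isospectrality. Finally, testing against any even $\varphi$ with $\varphi(0)=0$ but $\widehat\varphi(0)\neq0$ (for instance $\varphi(k)=k^2e^{-k^2}$) recovers $\mathcal{L}=\mathcal{L}'$, completing the proof.
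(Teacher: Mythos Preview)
Your approach is essentially the same as the paper's: the test function is identical (up to notation), and the right-hand side is killed in the same way. Two minor differences: you spell out the Weyl-law tail estimate needed to pass to the limit in the spectral sum, which the paper skips, and for length-isospectrality the paper simply invokes Theorem~\ref{eigenvalue_spectrum} (its hypothesis being trivially satisfied once all $m_G(k)=m_{G'}(k)$ for $k>0$) rather than re-running the $\check\psi_K$ argument as you propose.
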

\begin{proof}
  Let $\psi \in C_c^{\infty}(\R)$ such that \mbox{$\supp(\psi) \subset [-1,1]$}, \mbox{$\psi(0)=1$}, \mbox{$0 \le \psi(l)\le 1$} and $\psi$ is even. 
Fix $k_0 \in \R$. Let
\begin{align}
 \psi_L(l):=\frac{2}{L}\psi(\frac{l}{L})\cos(lk_0)
\end{align}
for $L$ a large parameter. Then $\check{\psi_L}(k)= 2\pi\widehat{\psi}(L(k-k_0))+2\pi\widehat{\psi}(L(k+k_0))$. 

By the Paley-Wiener 
theorem (theorem \ref{Paley-Wiener}), $\check{\psi}_L$ is a valid test 
function (see definition \ref{test_function}) for the trace formula.
We plug $\check{\psi}_L$ in the difference of the trace formulas (corollary \ref{main_trace}) for $G$ and $G'$ and obtain
\begin{equation}\label{trace_L} \left. \begin{split} 
&2\pi\sum_{k \in \R_{ > 0}}\left(m_G(k)-m_{G'}(k)\right)\widehat{\psi_L}(k)\\
=&(\mathcal{L}-\mathcal{L}')\psi_L(0) - \pi\left(N-N'\right)\widehat{\psi_L}(0)
 + \sum_{l \in \R_{>0}}\left(A_G(l)-A_{G'}(l)\right)\psi_L(l) \end{split} \right.
\end{equation}
For the term involving the length-spectrum we can estimate
\begin{align}
 &\left| \sum_{l \in \R_{>0}}\left(A_G(l)-A_{G'}(l)\right)\psi_L(l) \right|\\
 =& \left| \sum_{l \in \R_{>0}}\left(A_G(l)-A_{G'}(l)\right)\frac{2}{L}\psi(\frac{l}{L})\cos(lk_0) \right|\\
\le & 2\left| \sum_{l \in [0,L]}\frac{A_G(l)-A_{G'}(l)}{L} \right|\\
\ra & 0
\end{align}
as $L$ goes to infinity. We have $\psi_L(0)=\frac{2}{L} \ra 0$ as $L$ goes to infinity. Similarly 
$\widehat{\psi_L}(0)=\widehat{\psi}(k_0L)+\widehat{\psi}(-k_0L) \ra 0$ as $L$ goes to infinity because $\widehat{\psi}$ is rapidly 
decreasing.

This means we have 
\begin{align}
 \lim_{L\ra \infty}\sum_{k \in \R_{> 0}}\left(m_G(k)-m_{G'}(k)\right)\widehat{\psi_L}(k)= 0
\end{align}
because all other terms in equation \eqref{trace_L} go to zero in the limit \mbox{$L \ra \infty$}. We have 
$\widehat{\psi_L}(k_0)\ra 2\pi \widehat{\psi}(0) \neq 0$ and 
$\widehat{\psi_L}(k)\ra 0$ for all fixed $k \neq \pm k_0$ as $L \ra \infty$.
This implies $m_G(k_0)=m_{G'}(k_0)$ for all $k_0>0$. Thus $G$ and $G'$ have the same eigenvalue-spectrum with the possible exception 
of the eigenvalue zero, so by theorem \ref{eigenvalue_spectrum} they are Laplace-isospectral and length-isospectral.
\end{proof}

\begin{remark}
 Note that $\sum_{l \in [0,L]}A_G(l)$ grows exponentially in $L$ so presumably our bound is not best possible.
\end{remark}

\section{Acknowledgement}

This work was supported by a grant from EPSRC (grant EP/G021287/1).

\bibliographystyle{amsalpha}   
\addcontentsline{toc}{section}{Bibliography}   
\bibliography{literatur}

\def\cprime{$'$}
\providecommand{\bysame}{\leavevmode\hbox to3em{\hrulefill}\thinspace}
\providecommand{\MR}{\relax\ifhmode\unskip\space\fi MR }
\providecommand{\MRhref}[2]{%
  \href{http://www.ams.org/mathscinet-getitem?mr=#1}{#2}
}
\providecommand{\href}[2]{#2}
\begin{thebibliography}{BPBS09}

\bibitem[BE08]{BolteEndres08}
Jens Bolte and Sebastian Endres, \emph{Trace formulae for quantum graphs},
  Analysis on graphs and its applications, Proc. Sympos. Pure Math., vol.~77,
  Amer. Math. Soc., Providence, RI, 2008, pp.~247--259.

\bibitem[BE09]{BolteEndres09}
\bysame, \emph{The trace formula for quantum graphs with general self adjoint
  boundary conditions}, Ann. Henri Poincar\'e \textbf{10} (2009), no.~1,
  189--223.

\bibitem[BPBS09]{BPB09}
Ram Band, Ori Parzanchevski, and Gilad Ben-Shach, \emph{The isospectral fruits
  of representation theory: quantum graphs and drums}, J. Phys. A \textbf{42}
  (2009), no.~17, 175202, 42.

\bibitem[BR11]{BhagwatRajan10}
Chandrasheel Bhagwat and C.~S. Rajan, \emph{On a spectral analog of the strong
  multiplicity one theorem}, Int. Math. Res. Not. IMRN (2011), no.~18,
  4059--4073.

\bibitem[BSS06]{BSS06}
Ram Band, Talia Shapira, and Uzy Smilansky, \emph{Nodal domains on isospectral
  quantum graphs: the resolution of isospectrality?}, J. Phys. A \textbf{39}
  (2006), no.~45, 13999--14014.

\bibitem[DG75]{DuistermaatGuillemin75}
J.~J. Duistermaat and V.~W. Guillemin, \emph{The spectrum of positive elliptic
  operators and periodic bicharacteristics}, Invent. Math. \textbf{29} (1975),
  no.~1, 39--79.

\bibitem[EGM98]{EGM98}
J.~Elstrodt, F.~Grunewald, and J.~Mennicke, \emph{Groups acting on hyperbolic
  space}, Springer Monographs in Mathematics, Springer-Verlag, Berlin, 1998,
  Harmonic analysis and number theory.

\bibitem[Gan77]{Gangolli77}
Ramesh Gangolli, \emph{The length spectra of some compact manifolds of negative
  curvature}, J. Differential Geom. \textbf{12} (1977), no.~3, 403--424.

\bibitem[Gri07]{Grieser07}
Daniel Grieser, \emph{Monotone unitary families},
  http://arxiv.org/abs/0711.2869 (2007).

\bibitem[GS01]{GutkinSmilansky01}
Boris Gutkin and Uzy Smilansky, \emph{Can one hear the shape of a graph?}, J.
  Phys. A \textbf{34} (2001), no.~31, 6061--6068.

\bibitem[GS06]{GnutzmannSmilansky06}
Sven Gnutzmann and Uzy Smilansky, \emph{Quantum graphs: Applications to quantum
  chaos and universal spectral statistics}, Advances in Physics \textbf{55}
  (2006), 527--625.

\bibitem[GW99]{GasquetWitomski99}
C.~Gasquet and P.~Witomski, \emph{Fourier analysis and applications}, Texts in
  Applied Mathematics, vol.~30, Springer-Verlag, New York, 1999, Filtering,
  numerical computation, wavelets, Translated from the French and with a
  preface by R. Ryan.

\bibitem[H{\"o}r76]{Hormander76}
Lars H{\"o}rmander, \emph{Linear partial differential operators}, Springer
  Verlag, Berlin, 1976.

\bibitem[Hub59]{Huber59}
Heinz Huber, \emph{Zur analytischen {T}heorie hyperbolischen {R}aumformen und
  {B}ewegungsgruppen}, Math. Ann. \textbf{138} (1959), 1--26.

\bibitem[Kel11]{Kelmer11}
Dubi Kelmer, \emph{A refinement of strong multiplicity one for spectra of
  hyperbolic manifolds}, http://arxiv.org/abs/1108.2977 (2011).

\bibitem[KPS07]{KPS07}
Vadim Kostrykin, J{\"u}rgen Potthoff, and Robert Schrader, \emph{Heat kernels
  on metric graphs and a trace formula}, Adventures in mathematical physics,
  Contemp. Math., vol. 447, Amer. Math. Soc., Providence, RI, 2007,
  pp.~175--198.

\bibitem[KS99]{KottosSmilansky99}
Tsampikos Kottos and Uzy Smilansky, \emph{Periodic orbit theory and spectral
  statistics for quantum graphs}, Ann. Physics \textbf{274} (1999), no.~1,
  76--124.

\bibitem[KS03]{KostrykinSchrader03}
Vadim Kostrykin and Robert Schrader, \emph{Quantum wires with magnetic fluxes},
  Comm. Math. Phys. \textbf{237} (2003), no.~1-2, 161--179, Dedicated to Rudolf
  Haag. \MR{2007178 (2005b:81047)}

\bibitem[KS06]{KostrykinSchrader06}
\bysame, \emph{Laplacians on metric graphs: eigenvalues, resolvents and
  semigroups}, Quantum graphs and their applications, Contemp. Math., vol. 415,
  Amer. Math. Soc., Providence, RI, 2006, pp.~201--225.

\bibitem[Rot84]{Roth83}
Jean-Pierre Roth, \emph{Le spectre du laplacien sur un graphe}, Th\'eorie du
  potentiel ({O}rsay, 1983), Lecture Notes in Math., vol. 1096, Springer,
  Berlin, 1984, pp.~521--539.

\bibitem[vB01]{vonBelow99}
Joachim von Below, \emph{Can one hear the shape of a network?}, Partial
  differential equations on multistructures ({L}uminy, 1999), Lecture Notes in
  Pure and Appl. Math., vol. 219, Dekker, New York, 2001, pp.~19--36.

\end{thebibliography}

\end{document}